\DeclareMathOperator{\V}{Vert} 
\DeclareMathOperator{\GL}{GL} 
\DeclareMathOperator{\SL}{SL} 
\DeclareMathOperator{\PSL}{PSL} 
\DeclareMathOperator{\PGL}{PGL} 
\DeclareMathOperator{\Span}{span} 
\DeclareMathOperator{\n}{N}     
\DeclareMathOperator{\Res}{Res}   
\DeclareMathOperator{\Min}{Min} 
\DeclareMathOperator{\vol}{vol} 
\DeclarePairedDelimiterX\set[1]{\lbrace}{\rbrace}{\def\given{\;\delimsize\vert\;}#1}
\DeclarePairedDelimiterX\abs[1]{\lvert}{\rvert}{\def\given{\;\delimsize\vert\;}#1}
\DeclarePairedDelimiterX\ceiling[1]{\lceil}{\rceil}{\def\given{\;\delimsize\vert\;}#1}
\newcommand{\Hy}{\mathbb{H}} 
\newcommand{\Nperf}{N_{\text{perf}}}
\newcommand{\bG}{\mathbf{G}} 
\newcommand{\CC}{\mathbb{C}} 
\newcommand{\RR}{\mathbb{R}} 
\newcommand{\QQ}{\mathbb{Q}} 
\newcommand{\ZZ}{\mathbb{Z}} 
\newcommand{\PP}{\mathbb{P}} 
\newcommand{\OO}{\mathcal{O}} 
\newcommand{\mat}[1]{\begin{bmatrix*} #1 \end{bmatrix*}}
\newcommand{\fn}{\mathfrak{n}}
\newcommand{\fp}{\mathfrak{p}}
\newcommand{\dbar}{\bar{d}}
\newtheorem{theorem}{Theorem}[section]
\newtheorem{lemma}[theorem]{Lemma}
\newtheorem{proposition}[theorem]{Proposition}
\theoremstyle{definition}
\newtheorem{definition}[theorem]{Definition}
\theoremstyle{remark}
\newtheorem{remark}[theorem]{Remark}
\begin{document}

\title{Perfect forms over imaginary quadratic fields}


\author[Scheckelhoff]{Kristen Scheckelhoff}
\address{Department of Mathematics and Statistics\\ 
University of North Carolina at Greensboro\\Greensboro, NC 27412}
\email{kmscheck@uncg.edu}
\urladdr{\url{https://mathstats.uncg.edu/people/directory/kristen-scheckelhoff/}}
\author[Thalagoda]{Kalani Thalagoda}
\address{Department of Mathematics and Statistics\\ 
University of North Carolina at Greensboro\\Greensboro, NC 27412}
\email{kmthalag@uncg.edu}
\urladdr{\url{https://mathstats.uncg.edu/people/directory/kalani-thalagoda/}}
\author[Yasaki]{Dan Yasaki}
\address{Department of Mathematics and Statistics\\ 
University of North Carolina at Greensboro\\Greensboro, NC 27412}
\email{d\_yasaki@uncg.edu}
\urladdr{\url{https://mathstats.uncg.edu/yasaki/}}

\keywords{perfect forms, Hermitian forms, Voronoi reduction, hyperbolic tessellations}

\begin{abstract}
In this work, we compute the perfect forms for all imaginary quadratic fields of absolute discriminant up to $5000$ and study the number and types of the polytopes that arise.  We prove a bound on the combinatorial types of polytopes that can arise regardless of discriminant, and give a volumetric argument for a lower bound on the number of perfect forms as well as a heuristic for a better lower bound for imaginary quadratic fields of sufficiently large absolute discriminant.
\end{abstract}

\maketitle

\section{Introduction}
Quadratic forms are central objects in mathematics that have been studied for centuries. The geometric theory of quadratic forms, developed by Hermite, can be viewed as part of Minkowski's geometry of numbers. The Hermite problem about finding the arithmetical minima of positive definite quadratic forms is equivalent to the problem of densest lattice packings \cite{schurmann-book}. 

One approach to studying quadratic forms is using
Voronoi's reduction theory \cite{VoronoiI} of perfect quadratic forms.
Voronoi proves there is an infinite polyhedron $\Pi$ in the space of
quadratic forms on which the arithmetic group $\Gamma = \GL_n(\ZZ)$
acts.   The polyhedron is constructed using the minimal vectors of
quadratic forms.  Specifically, the faces of $\Pi$ determine the
possible configurations of minimal vectors of quadratic forms.  The
structure of $\Pi$ captures much of the arithmetic information of
$\Gamma$.   

Indeed, in the classical case of $\Gamma = \SL_2(\ZZ)$,
$\Pi$ descends modulo scaling to give the Farey tessellation of $\Hy$ shown in Figure~\ref{fig:tess-and-spine}.  The tessellation gives rise to a complex which can be used to compute effectively with classical holomorphic modular forms. The main approach is the modular symbol method, introduced by Birch \cite{birch} in 1971 and formalized by Manin \cite{manin} and further developed by Mazur \cite{mazur} and Cremona \cite{cremona-mod}.
The technique for computing the action of the Hecke algebra on modular forms has an
interpretation in this setting in terms of the edges of this
tessellation.  Homothety classes of \emph{perfect forms}, forms that
are uniquely determined by their minimal vectors and arithmetic
minimum, are in bijection with the facets of $\Pi$.  In this setting,
each ideal triangle in the Farey tessellation of $\Hy$ is
$\Gamma$-equivalent to the ideal triangle with vertices $\set{\infty,
  0, 1}$.  This implies that every perfect binary quadratic form is
$\Gamma$-equivalent to the form $\phi(x,y) = x^2 - xy + y^2$.  
Dual to the tessellation is the trivalent tree of homothety classes of
\emph{well-rounded binary 
quadratic forms}, forms whose minimal vectors span $\RR^2$.  
Bass-Serre theory \cite{serre} allows one to use the action of $\Gamma$ on this
tree to recover the amalgam structure of $\Gamma$.  For groups acting on complexes of higher dimensions,  there is an analogous theory due to Gersten and Stallings \cite{Sta91}, Haefliger \cite{Hae91}, and 
Corson \cite{Cor92}. 

\begin{figure}
    \centering
    \includegraphics[trim=12 0 12 0, clip]{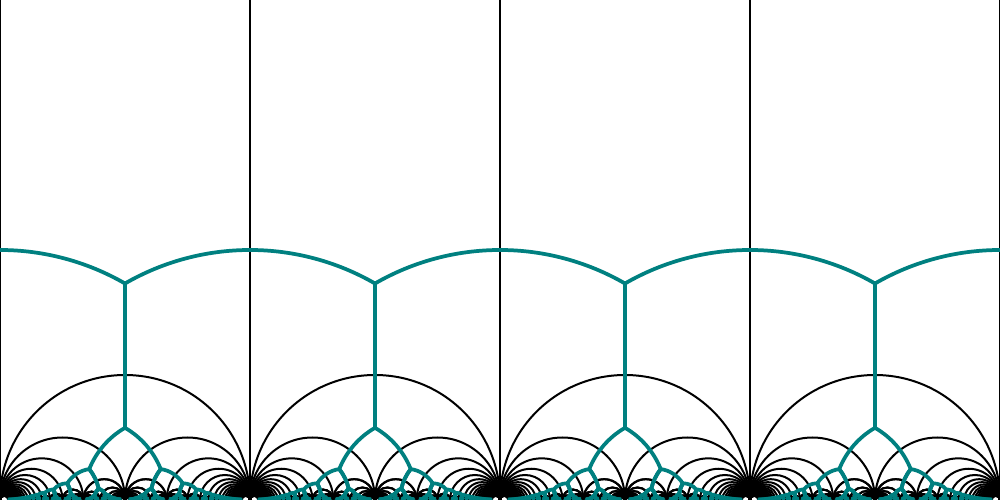}
    \caption{Triangulation of the Poincar\'e upper half-plane by ideal triangles shown here in black.  The well-rounded retract is the  trivalent tree shown in teal.}
    \label{fig:tess-and-spine}
\end{figure}

These concepts have important analogues for $\Gamma = \GL_n(\OO_F)$,
where $\OO_F$ is the ring of integers in a number field $F$.  This project
focuses on the case $n = 2$ with $F$ an imaginary quadratic field.  In Section~\ref{sec:voronoi}, we briefly outline the general Voronoi-Koecher theory of perfect forms and give a bit of the history.  In Section~\ref{sec:imquad}, we specialize to the imaginary quadratic field case and set notation.  
In Section~\ref{sec:results}, we give our main results.  First, we extend the explicit computations of Cremona \cite{Cremona1984} and of the last author in \cite{yasaki-bianchi} classifying perfect Hermitian forms to include all imaginary quadratic fields of absolute discriminant up to $5000$.  The results are shown in Fig.~\ref{fig:poly-total}.  In Theorem~\ref{thm:minbound}, we give a bound on the combinatorial complexity of the configuration of minimal vectors of perfect binary Hermitian forms over imaginary quadratic fields, independent of the discriminant of the field.  In Theorem~\ref{thm:bound}, we give a lower bound on the number of perfect binary Hermitian forms over imaginary quadratic fields, as a function of the discriminant of the field.   

We thank the referee for their helpful comments. The second author thanks the UNCG Graduate School for their support through the Summer Research Fellowship, when part of this research was conducted.

\section{Voronoi complex and perfect forms} \label{sec:voronoi}
Let $F$ be a number field with ring of
integers $\OO_F$. The space of positive definite 
quadratic/Hermitian forms over $F$ form an open cone in a real vector
space. There is a natural decomposition of this cone 
into polyhedral cones corresponding to the facets of the Voronoi-Koecher
polyhedron $\Pi$ \cite{koecher,ash77}.  

The Voronoi complex is the result of a 
polyhedral reduction theory for $\Gamma$ developed by Ash
\cite[Ch.~II]{AMRT_SmoothCompactification} and Koecher \cite{koecher},
generalizing Voronoi's work \cite{VoronoiI} on perfect quadratic forms 
over $\QQ$.  

  Let $X$ be the symmetric space associated to the group of real points of the restriction of scalars of the general linear group  $\bG =\Res_{F/\QQ}(\GL_n)$.  For $\Gamma =\GL_n(\OO_F)$, their work asserts the existence of a $\Gamma$-invariant
tessellation of $X$ coming from a polyhedral cone decomposition of a
space of positive definite quadratic/Hermitian forms.  A spine for $\Gamma$, a
$\Gamma$-equivariant deformation retract of $X$ of dimension equal to
the virtual cohomological dimension of $\Gamma$, is guaranteed by
this work and that of Ash-McConnell \cite{AM-wellrounded} and can be recovered
from the Voronoi tessellation.  The CW-structure of
the tessellation and associated spine can be used to compute the
cohomology of $\Gamma$, and the action of Hecke operators on the
complex group cohomology can also be
described in terms of this structure.

Perfect forms are quadratic/Hermitian forms which are uniquely defined
by their minimum and set of minimal vectors. 
Rational perfect forms have been well-studied, and with current
computing power, $n$-ary forms have been 
classified for $n \leq 8$ \cite{perf8}. See Table~\ref{tab:Qperf} for
the details.

The situation for perfect forms
over number fields is far from complete. Indeed, there 
is even disagreement over what perfection should mean for these
forms.  This project uses perfection in the sense of Koecher and Ash
 described below.

Koecher \cite{koecher} and Ash \cite{ash77,
  AMRT_SmoothCompactification} give a geometric definition of 
\emph{perfect forms}. They generalize the Voronoi polyhedron 
$\Pi$ and define a quadratic form $\phi$ to be \emph{perfect} if the
minimal vectors of $\phi$ are the vertices 
of a facet of $\Pi$. In \cite{Cou} Coulangeon introduces a different
notion of perfection for quadratic forms over 
$F$ known as \emph{Humbert forms}. Coulangeon and Watanabe \cite{CW}
extend Voronoi's theorem on 
extreme forms to Humbert forms; namely, a form is extreme if and only
if it is perfect and eutactic. 

The top-dimensional cells in the Voronoi complex are in
bijection with $\Gamma$-equivalence classes of perfect $n$-ary
forms.  Thus a first step in computing the Voronoi complex is
enumerating such forms.  The structure of such forms (number of
$\Gamma$-equivalence classes of forms, configurations of their minimal
vectors) as $n$ and $F$ vary is not well understood outside of
explicit computations.   Many such explicit computations have been
carried out, and a survey of some of the known results is given below.

Over $F = \QQ$, the number $\Nperf$ of $\GL_n(\ZZ)$-equivalence
classes of perfect forms appears to 
grow rapidly with $n$ (Table~\ref{tab:Qperf}).  
\begin{table}
\renewcommand{\arraystretch}{1.1}
  \caption{The number $\Nperf$ of $\GL_n(\ZZ)$ equivalence classes of
    perfect forms.}\label{tab:Qperf}
  \begin{tabular}{ccc}
\toprule
$n$ &  $\Nperf$ & \text{Authors}\\
\midrule
2&1& Voronoi \cite{VoronoiI}\\
3&1& Ibid.\\
4&2& Ibid.\\
5&3& Ibid.\\
6&7& Barnes \cite{barnes}\\
7&33& Jacquet-Chiffelle \cite{jaquet, jaquet-chiffelle}\\
8&10916&Dutour-Schurmann-Vallentine \cite{perf8}\\
\bottomrule   
\end{tabular}
\end{table}

Computation of the Voronoi complex  requires not just the perfect forms, but information about lower-dimensional cells as well.  As the number and complexity of the top-dimensional cells becomes large, computation of the lower-dimensional cells becomes infeasible.  This bottleneck has been 
overcome for $n \leq 7$ in work of Elbaz-Vincent-Gangl-Soul\'e
\cite{PerfFormModGrp} by taking advantage of 
external symmetries of each cell.  The $n = 8$ case is still out of
reach, primarily due to the complexity of the $E_8$ lattice.  The
shortest vectors of this lattice form a convex polytope with 
\num{25075566937584} facets.  

Over other number fields, the data is far less
complete.  The computations, even for $n = 1$, are nontrivial.  In
Table~\ref{tab:pf-nf}, the known classifications of $n$-ary
perfect forms over number fields are listed.

\begin{table} 
\renewcommand{\arraystretch}{1.1}
  \caption{Classification of $\GL_n(\OO_F)$-equivalence classes of
    perfect $n$-ary forms over number field $F$.} \label{tab:pf-nf}
\begin{tabular}{ccc}
\toprule
  $n$ & Field & Authors\\
\midrule
1 & $\QQ(\sqrt{d})$, $0 < d \leq 200000$, $d$ squarefree & Y. \cite{Yasunary}\\
& $\QQ(\zeta_m)$, $\phi(m) \leq 17$ & Sigrist \cite{sigrist}\\
& $\QQ(\alpha)$, $\alpha^3 + \ell \alpha - 1 = 0$,
$4\ell^3 + 27$ squarefree & Komatsu-Watanabe \cite{komatsu-watanabe}\\
\midrule
2 & $\QQ(\sqrt{d})$, $d \in \set{2,3,5,6}$ & Ong \cite{Ong}, Leibak \cite{lei6}\\
  & $\QQ(\sqrt{-d})$, $0 < d \leq 100$, $d$ squarefree & Cremona \cite{Cremona1984}, Y. \cite{yasaki-bianchi}\\
 & $\QQ(\zeta_5)$& Y. \cite{Yascyclotomic}\\
& $\QQ(\alpha)$, $\alpha^3 - \alpha^2 + 1 = 0$ &
Gunnells-Y. \cite{gy_cubic}\\
 & $\QQ(\zeta_{12})$& Jones \cite{jones}\\
  & $\QQ(\zeta_{8})$ and $\QQ(\alpha)$, $\alpha^4-\alpha^3+2\alpha^2+x+1=0$ & Jones, Sengun \cite{modp}\\
\midrule
3 & $\QQ(\sqrt{-1})$ & Staffeldt \cite{staffeldt}\\
 &  $\QQ(\sqrt{D})$, discriminant $-24 \leq D < 0$ & AIM group \cite{imquad-coh}\\
\midrule
4 &  $\QQ(\sqrt{D})$, $D \in \set{-3, -4}$& AIM group \cite{imquad-coh}\\
\bottomrule
\end{tabular}
\end{table}

\section{Perfect forms over imaginary quadratic fields} \label{sec:imquad}
In this section, we give just enough details to set the relevant notation.  We follow \cite[\S 3]{yasaki-bianchi} and \cite[\S 2 and \S 6]{imquad-coh} closely, and the reader should reference these for additional details and a description of the algorithms involved.

Fix a square-free positive integer $d$.  Let $F$ be the imaginary quadratic field $F=\QQ(\sqrt{-d})$, with ring of algebraic integers $\OO_F$.  Then $F$ has discriminant $\Delta = -4d$ if $d \equiv 1,2\bmod 4$, and $\Delta = -d$ otherwise.  The ring of integers $\OO_F$ is equal to $\ZZ[\omega]$, where
\[
\omega = \begin{cases}
\sqrt{-d} & \text{if $d \equiv 1,2\bmod 4$}\\
\frac{1+\sqrt{-d}}{2} & \text{if $d \equiv 3\bmod 4$}.
\end{cases}
\]

We fix a complex embedding $F \hookrightarrow \CC$ and
identify $F$ with its image. We extend this identification to
vectors and matrices as well, and use $\bar{\cdot}$ to denote
complex conjugation on $\CC$, the non-trivial Galois automorphism on $F$.

Let $V$ be the $4$-dimensional real vector space of $2 \times 2$ complex Hermitian matrices with complex coefficients,
\[
V=\set*{ \mat{
a&b\\\bar{b}&c}
 \given a,c \in \RR, b \in \CC }.
\]
Let $C \subset V$ denote the subset of positive definite matrices.  Then $C$ is a codimension $0$ open cone. Here the boundary of the cone consists of semi-definite Hermitian forms; below we see that minimal vectors of a Hermitian form can be represented by elements on the boundary of $C$.

Using the chosen complex embedding of $F$, we view $V_F$, the $2 \times 2$ Hermitian matrices with entries in $F$, as a subset of $V$.  Define a map $q \colon \OO_F^2 \setminus
\set{0} \to V$ by $q(x) = x \bar{x}^t$.  For each $x \in \OO_F^2$, we have that $q(x)$ is on the boundary of $C$.  Let $C^*$ denote the union of $C$ and the image of $q$.  

The group $\GL_2(\CC)$ acts on $V$ by $g \cdot A = g A \bar{g}^t$.   The image of $C$ in the quotient of $V$ by positive homotheties can be identified with hyperbolic $3$-space $\Hy^3$.  The image of $q$ in this quotient is identified with $\PP^1(F) = F \cup \set{\infty}$, the set of cusps.

Each $A \in V$ defines a Hermitian form $A[x] = \bar{x}^tAx$, for $x \in \CC^2$.  Using the chosen complex embedding of $F$, we can view $\OO_F^2$ as a subset of $\CC^2$.

\begin{definition} \label{def:minimum}
  For $A \in C$, we define the \emph{minimum of $A$} as 
\[\min(A) \coloneqq  \inf_{v \in \OO_F^2 \setminus \set{0}} A[v].\]
Note that $\min(A)>0$ since $A$ is positive definite.
A vector $v \in \OO_F^2$ is called a
\emph{minimal vector of $A$} if $A[v] = \min(A)$.
We let $\Min(A)$ denote the set of minimal vectors of $A$.
\end{definition}

Since $\OO_F^2$ is discrete in the topology of $\CC^2$,
the set $\Min(A) $ is finite.  
In fact, in Theorem~\ref{thm:minbound}, we give an upper bound on the number of minimal vectors for any positive definite Hermitian form over an imaginary quadratic field, independent of the field.
A minimal vector $\mat{\alpha\\ \beta} \in \OO_F^2$ generates an ideal $(\alpha, \beta) \subseteq \OO_F$ that has minimal norm among ideals in its class in the class group of $F$.

\begin{definition}\label{def:perfect}
We say a Hermitian form $A \in C$ is a \emph{perfect Hermitian 
form over $F$} if
\[\Span_\RR\set{q(v) \given v \in \Min(A)} = V.\]
\end{definition}

The above definition is equivalent to the more classical definition of a perfect form, as a Hermitian form that is completely determined by its set of minimal vectors.  That is, a Hermitian form $A$ is \emph{perfect} if $\Min(A)$ determines $A$ up to scaling by $\RR^{+}$.

\begin{definition} A \emph{polyhedral cone} in $V$ is a subset $\sigma$ of the form 
\[\sigma = \set*{\sum_{i = 1}^n \lambda_i q(v_i) \given \lambda_i \geq
  0},\]
where $v_1, v_2, \dots, v_n$ are non-zero vectors in $\OO_F^2$.  
\end{definition}

\begin{definition}
A set of polyhedral cones $S$ forms a \emph{fan} if the following two conditions hold:
\begin{enumerate}
\item If $\sigma$ is in $S$ and $\tau$ is a face of $ \sigma $, then
  $\tau$ is in $S$.
\item If $\sigma$ and $\sigma'$ are in $S$, then $\sigma \cap \sigma'$ is a
  common face of $\sigma$ and $\sigma'$.
\end{enumerate}
Note that a face here can be of codimension higher than~$1$.
\end{definition}

\begin{theorem}\label{thm:koecher}
  There is a fan $S$ in $V$ with $\GL_2(\OO_F)$-action such
  that the following hold.
  \begin{enumerate}
  \item There are only finitely many $\GL_2(\OO_F)$-orbits in
    $S$.
  \item Every $y \in C$ is contained in the interior of a unique cone
    in $S$.
  \item Any cone $\sigma \in S$ with non-trivial
    intersection with $C$ has finite stabilizer in
    $\GL_2(\OO_F)$. \label{it:finite-stab} 
  \end{enumerate}
The $4$-dimensional cones in $S$ are in bijection with
perfect forms over $F$.
\end{theorem}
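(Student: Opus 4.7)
The plan is to realize $S$ as the fan of cones over the faces of the \emph{Koecher polyhedron}
\[
\Pi = \mathrm{conv}\set{q(v) \given v \in \OO_F^2 \setminus \set{0}} \subset V,
\]
and to deduce each assertion from the geometry of $\Pi$ combined with classical reduction theory. First I would establish the local finiteness of $\Pi$: for any $A \in C$, the linear functional $\ell_A(X) = \mathrm{tr}(AX)$ on $V$ satisfies $\ell_A(q(v)) = A[v]$, and since $A$ is positive definite and $\OO_F^2$ is a lattice in $\CC^2$, the set $\set{v \in \OO_F^2 \given A[v] \leq M}$ is finite for every $M$. Thus the minimum of $\ell_A$ on $\Pi$ is attained on a bounded face with finitely many vertices among the $q(v)$, and only finitely many vertices of $\Pi$ lie in any bounded region of $V$. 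Hence $\Pi$ has a locally finite boundary stratification by compact polytopes.

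Define $S$ to consist of the origin together with the cones $\mathrm{cone}(F)$ as $F$ ranges over the faces of $\Pi$. The fan axioms follow from the face lattice of $\Pi$: intersections and faces-of-faces are again faces. Item (2) of the theorem then follows because every $A \in C$ determines the unique face of $\Pi$ on which $\ell_A$ is minimized, and $A$ lies in the relative interior of the cone over that face. The $\GL_2(\OO_F)$-action on $V$ defined by $g \cdot X = g X \bar{g}^t$ satisfies $g \cdot q(v) = q(gv)$, so since $v \mapsto gv$ permutes $\OO_F^2 \setminus \set{0}$, the action preserves $\Pi$ and $S$. For item (3), a cone $\sigma$ meeting $C$ admits an interior point $A \in C$; averaging $A$ over the finite-index stabilizer of $\sigma$ (or using a canonical interior point like the sum of the generators $q(v_i)$) produces a positive definite form fixed by the stabilizer, which must therefore sit inside the compact stabilizer of $A$ in $\GL_2(\CC)$. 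Its intersection with the discrete group $\GL_2(\OO_F)$ is finite.

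The hardest step is item (1), finiteness of orbits, which requires reduction theory. I would argue as follows: any $4$-dimensional cone $\sigma$ contains an interior perfect form $A$, and a Hermite–Minkowski argument for the quadratic form induced by $A$ on $\OO_F^2 \otimes \RR \cong \RR^4$ produces a $\GL_2(\OO_F)$-translate of $A$ lying in a compact region of $C/\RR^+$. Because minimal vectors of such a form have bounded length and $\OO_F^2$ is discrete, only finitely many configurations $\Min(A)$ arise up to $\Gamma$-translation, giving finitely many $\Gamma$-orbits of $4$-dimensional cones. Finiteness for lower-dimensional cones follows because each is a face of some top-dimensional cone, and each top-dimensional cone has only finitely many faces.

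Finally, the bijection between $4$-dimensional cones and perfect forms is essentially dual: a $4$-dimensional cone $\sigma \in S$ corresponds to a facet of $\Pi$ whose vertices $q(v_1), \ldots, q(v_k)$ span $V$, and the unique-up-to-positive-scalar supporting functional $\ell_A$ along this facet defines $A \in C$ with $\Min(A) = \set{v_1, \ldots, v_k}$; the spanning condition is exactly Definition~\ref{def:perfect}, so $A$ is perfect. Conversely, for any perfect $A$, the span condition on $\Min(A)$ forces $\mathrm{cone}\set{q(v) \given v \in \Min(A)}$ to be $4$-dimensional, and it is the cone in $S$ corresponding to the supporting facet of $\Pi$ at $A$. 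The main obstacle is thus entirely concentrated in the reduction-theoretic finiteness argument for item (1); the rest of the theorem is essentially formal once $\Pi$ is known to be locally finite.
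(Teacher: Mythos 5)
The paper does not actually prove this theorem: it is quoted from the Voronoi--Koecher--Ash reduction theory (\cite{koecher,ash77}), and the text moves straight on to using the bijection. So there is no in-paper argument to compare against. Your overall architecture --- cones over the faces of the Koecher polyhedron $\Pi = \mathrm{conv}\set{q(v) \given v \in \OO_F^2\setminus\set{0}}$, finiteness of orbits via reduction theory, finite stabilizers via a canonical interior point, and the facet/perfect-form duality --- is the standard one and matches those sources.

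There is, however, a genuine error in your derivation of item (2). You assign to $A \in C$ the face of $\Pi$ on which $\ell_A(X) = \mathrm{tr}(AX)$ is minimized, namely $\mathrm{conv}\set{q(v) \given v \in \Min(A)}$, and claim that $A$ lies in the relative interior of the cone over that face. This conflates the two dual decompositions in Voronoi theory. The minimizing face of $\ell_A$ records the configuration $\Min(A)$; the cone of $S$ containing $A$ in its relative interior is instead determined by where the ray $\RR_{>0}A$ meets $\partial\Pi$ (radial projection), and these do not agree. Concretely, for $F = \QQ(i)$ and $A = \mathrm{diag}(1,t)$ with $t$ large, every minimal vector is a unit multiple of $e_1$, so the minimizing face is the single vertex $q(e_1) = \mathrm{diag}(1,0)$ and the cone over it is the ray $\RR_{\geq 0}\,\mathrm{diag}(1,0)$ of rank-one matrices, which does not contain the positive definite $A$. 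Indeed, if your claim held, every $y \in C$ would be a nonnegative combination of the $q(v)$ with $v \in \Min(y)$, which fails whenever $\Min(y)$ does not span $V$. The correct argument shows that the ray through any $y \in C$ meets the boundary of the closed convex set $\Pi$ (which avoids the origin) in exactly one point, lying in the relative interior of a unique face, and that distinct faces yield cones with disjoint relative interiors. Two smaller gaps: in item (3) you briefly ``average over the finite-index stabilizer,'' which is circular since finiteness of the stabilizer is what is being proved (your parenthetical repair via the canonical point $\sum_i q(v_i)$ is the right fix); and in the facet-to-perfect-form direction you need the supporting functional of a facet to be positive \emph{definite}, not merely semidefinite --- excluding a semidefinite normal with irrational kernel requires a separate (standard, but not automatic) Diophantine argument.
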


The bijection is explicit and allows one to compute the structure of
$S$ using a modification of Voronoi's algorithm
\cite[\S 2, \S 6]{imquad-coh}.  Specifically, $\sigma$  is a
$4$-dimensional cone in $S$ if and only if 
there exists a perfect Hermitian form $A$ such that
\[\sigma = \set*{\sum_{v \in \Min(A)} \lambda_v q(v) \given \lambda_v
  \geq 0}.\]
Modulo positive homotheties, the fan $S$ descends to a $\GL_2(\OO_F)$-tessellation of $\Hy^3$ by ideal polytopes. 

\begin{remark} \label{rem:min}
If $v \in \Min(A)$, then $-v \in \Min(A)$.  Following the conventions for the classical case, we pick exactly one representative in $\set{\pm v}$ to include in $\Min(A)$.
Furthermore, the value $A[v]$ only depends on the image $q(v)$.  When the class number of $F$ is greater than $1$, there may be additional vectors with the same image.  For example, let $F$ be the imaginary quadratic field of discriminant $-91$, and let $\OO_F = \ZZ[\omega]$, where $\omega = \frac{1 + \sqrt{-91}}{2}$.  Then $\pm\mat{\omega  + 1\\
-\omega + 4}$ and $\pm \mat{5\\-\omega - 3}$ both have the same image under $q$.  They give rise to the same cusp $\frac{\omega  + 1}{-\omega + 4} = \frac{5}{-\omega - 3} = \frac{w - 4}{7}$.  The prime $5$ splits in $\OO_F$, so that $5\OO_F = \fp_5\bar{\fp}_5$, and the ideal $(\omega  + 1,-\omega + 4)$ is $\fp_5$, while the ideal $(5,-\omega - 3)$ is $\bar{\fp}_5$.
It follows that an ideal polytope in $\Hy^3$ determined by a perfect form over $F$ may have strictly fewer vertices than the form has minimal vectors.
\end{remark}

\section{Results}\label{sec:results}
We computed the perfect forms for all imaginary quadratic fields $F$ of absolute discriminant up to $5000$, extending previous explicit computations \cite{Cremona1984} and \cite{yasaki-bianchi}. In \cite{yasaki-bianchi}, the third author computed perfect forms for all the class number 1 and 2 cases and for all the imaginary quadratic fields $\QQ(\sqrt{-d})$ with square free $d < 100$, which was a total of 69 fields. We expanded this computation to compute all fields with discriminant less than $5000$ which was a total of $ 1524$ fields. See Figure~\ref{fig:poly-total} for a plot of $\Nperf(F)$ as a function of the discriminant of $F$.

  \begin{figure}
    \centering
    \includegraphics[width=0.75\textwidth]{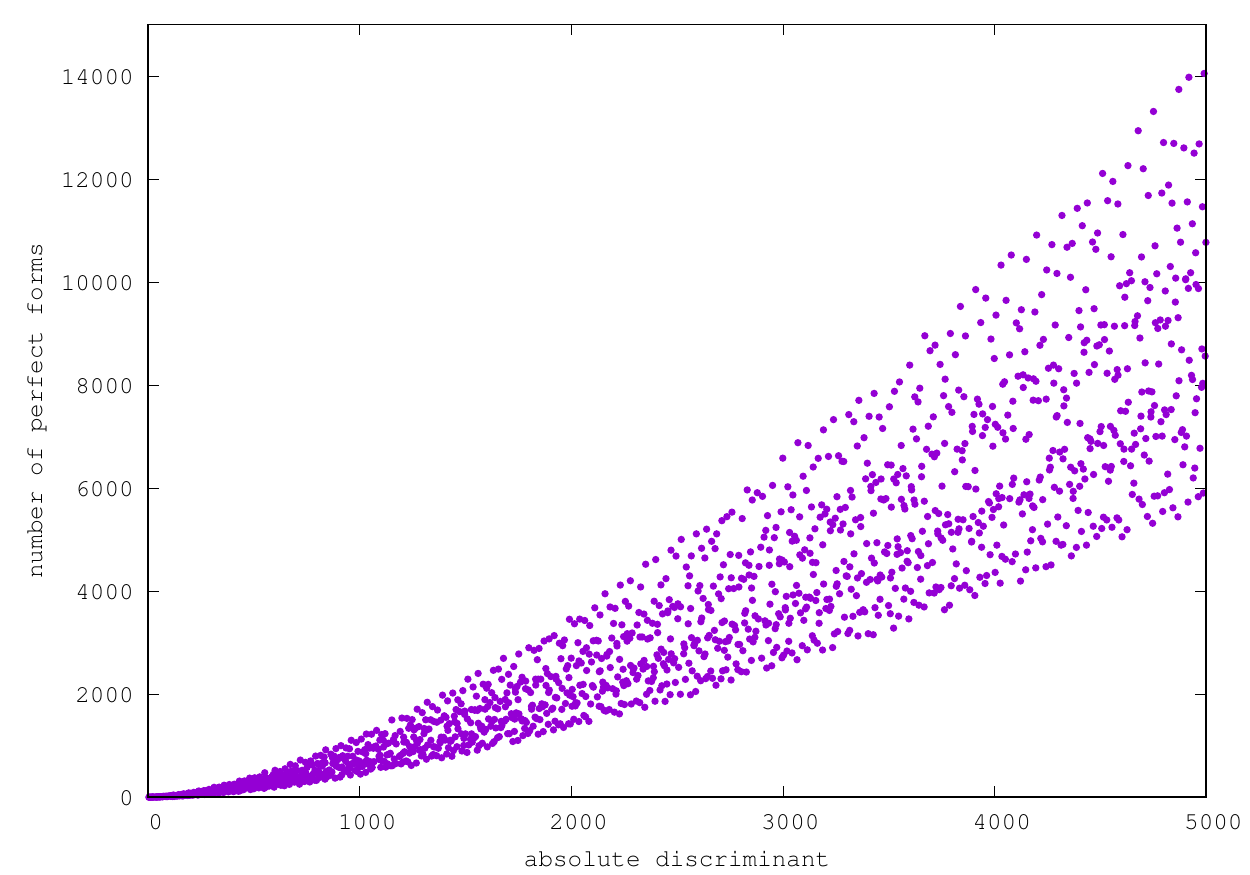} 
    \caption{Number of perfect forms $\Nperf(F)$, indexed by absolute discriminant of $F$.}
    \label{fig:poly-total}
\end{figure}

The following theorem ensures that the configurations of minimal vectors of perfect binary Hermitian forms do not get arbitrarily complicated.  In particular, the number of minimal vectors is bounded, independent of the imaginary quadratic field, so there are only a finite number of combinatorial types of ideal polytopes arising in the Voronoi tessellation of $\Hy^3$.

\begin{theorem}\label{thm:minbound}
  Let $A$ be a positive definite binary Hermitian form over an imaginary quadratic field.  Then 
  \[
  \# \Min(A)  \leq 12.
  \]
\end{theorem}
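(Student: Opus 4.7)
The plan is to reduce the statement to the classical kissing-number bound in dimension four. I would identify $\CC^2$ with $\RR^4$ via the chosen complex embedding and observe that
\[
\langle u, v\rangle_\RR := \operatorname{Re}(\bar v^t A u)
\]
is a positive definite real inner product on $\RR^4$ (it satisfies $\langle v, v\rangle_\RR = A[v]$), under which $\OO_F^2$ becomes a full-rank $\ZZ$-lattice $L \subset \RR^4$. The minimum and set of minimal vectors of $A$ coincide with those of $L$ relative to this inner product.

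The main computation is the standard polarization trick. For any two distinct $v, w \in \Min(A)$, both $v+w$ and $v-w$ are nonzero lattice vectors, since $\Min(A)$ was chosen to contain exactly one representative of each pair $\set{u, -u}$. Minimality of $m := \min(A)$ therefore forces $A[v \pm w] \geq m$; combined with $A[v] = A[w] = m$, expanding the Hermitian form gives $\abs{\langle v, w\rangle_\RR} \leq m/2$. Consequently, the $2 \cdot \#\Min(A)$ unit vectors $\set{\pm v/\sqrt{m} \given v \in \Min(A)}$ are distinct points on the unit sphere of $(\RR^4, \langle \cdot, \cdot\rangle_\RR)$ with pairwise angular separation at least $60^\circ$.

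Finally, I would invoke the kissing-number bound in dimension four: any collection of unit vectors in $\RR^4$ with pairwise angles at least $60^\circ$ has cardinality at most $24$ (Musin's sharp bound; the earlier bound of $25$ coming from the Delsarte/Levenshtein linear programming method also suffices, since $\lfloor 25/2\rfloor = 12$). Hence $2 \cdot \#\Min(A) \leq 24$, giving $\#\Min(A) \leq 12$.

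The only non-elementary input is the dimension-four kissing number inequality; everything else is a change of scalars together with a polarization identity. I expect no real obstacle: the argument actually shows that any positive definite rank-four real lattice has at most $12$ minimal vectors modulo sign, a classical bound attained by the $D_4$ root lattice, so one cannot improve it at this level of generality.
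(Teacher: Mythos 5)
Your proposal is correct, but it takes a genuinely different route from the paper. The paper builds the evaluation-preserving map $\Phi \colon V_F \to V_\QQ$ induced by the $\ZZ$-basis $\set{1,\omega}$ of $\OO_F$, so that $A[v] = \Phi(A)[\phi(v)]$, and then quotes Korkine--Zolotareff's classification of perfect positive definite quaternary forms over $\QQ$: there are exactly two, with $12$ and $10$ minimal vectors up to sign, and (by Voronoi theory, since the minimal vectors of any form sit among those of a perfect form) every positive definite quaternary form therefore has at most $12$ minimal vectors. You perform the same change of scalars but replace the appeal to the classification of perfect forms with the polarization inequality $\abs{\langle v,w\rangle_\RR} \leq \min(A)/2$ and the dimension-four kissing number. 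Your polarization step is sound (since $\Min(A)$ contains one representative per antipodal pair, $v\pm w \neq 0$ for distinct $v,w$), and the numerics work with either Musin's sharp value $24$ or the classical linear-programming bound of $25$. The trade-off: the paper's argument stays entirely within the perfect-form framework it is already using and relies only on a nineteenth-century classification, whereas yours is structurally more elementary but outsources the finiteness to the kissing-number inequality, which in its sharp form is a deep theorem; since your configuration is a lattice (indeed antipodal), the classical lattice kissing number in dimension four would suffice and is itself essentially equivalent to the quaternary perfect-form classification the paper cites. Both proofs identify $D_4$ as the extremal configuration, matching the paper's observation that the bound of $12$ is attained.
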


\begin{proof}
Let $V_\QQ$ denote the set of $4 \times 4$ symmetric matrices with entries in $\QQ$, viewed as quaternary forms over $\QQ$.  Recall for each imaginary quadratic field $F$, we let $V_F$ denote the set of $2 \times 2$ Hermitian matrices with entries in $F$, viewed as Hermitian forms over $F$.  We construct a map $\Phi \colon V_F \to V_\QQ$ which takes a positive definite Hermitian form over $F$ to a positive definite quaternary form over $\QQ$.

  There are exactly two perfect forms in the space of positive definite quaternary forms over $\QQ$ \cite{Korkine}; one with $12$ minimal vectors and one with $10$ minimal vectors.  It follows that every positive definite quaternary form over $\QQ$ has at most $12$ minimal vectors.

  Fixing our $\ZZ$-basis $\set{1,\omega}$ for $\OO_F$, we get a bijection $\phi\colon \OO_F^2 \to \ZZ^4$.  This induces a  map $\Phi \colon V_F \to V_\QQ$ such that $\Phi$ preserves vector evaluation;
  \[A[v]=\Phi(A)[\phi(v)], \quad \text{for all $v \in \OO_F^2$}.\]

Note the value of $\min(\Phi(A))$ is determined by $\min(A)$, since vector evaluation is preserved by $\Phi$.  Since there are at most $12$ minimal vectors for any quaternary form  $\Phi(A)$, there can be at most $12$ minimal vectors for $A$.
\end{proof}

The bound in the above theorem is sharp. In particular, there are perfect forms with exactly $12$ minimal vectors, up to sign.  Furthermore, despite the phenomena described in Remark~\ref{rem:min}, where the number of vertices is strictly less than the number of minimal vectors, the bound of $12$ vertices here is sharp.  In particular, there are imaginary quadratic fields $F$ for which there is a perfect form that gives rise to an ideal polytope with $12$ vertices.  

Since the minimal vectors of each perfect form map to vertices of their corresponding polytopes in the cone, and there are finitely many combinatorial types of polytopes with $12$ or fewer vertices, only finitely many types of polytopes can arise in a tessellation of $\Hy^3$ as we vary the discriminant of the imaginary quadratic field.  

\begin{remark}
 According to the data from \cite{polysmall}, there are  \num{6860405} combinatorial types of $3$-dimensional polytopes with at most $12$ vertices. However, in the range of our computational investigation, we only observed $8$ distinct combinatorial types of polytopes: tetrahedron, octahedron, cuboctahedron, triangular prism, hexagonal cap, square pyramid, truncated tetrahedron, and triangular dipyramid. These were also the combinatorial types of polytopes observed in \cite{yasaki-bianchi}. Table \ref{tab:poly-types} summarizes the information about how often each type of polytope was observed and the number of fields which witnessed these in the range of our computation. 
  
\end{remark}
\begin{table}
\label{tab:poly-types}
\centering
\[
\begin{array}{llcc}
   \toprule   
   \text{} &\text{Polytope Type}  & \text{Number of Fields} & \text{Percentage of Polytopes }\\
\midrule
  \includegraphics[scale = 0.06]{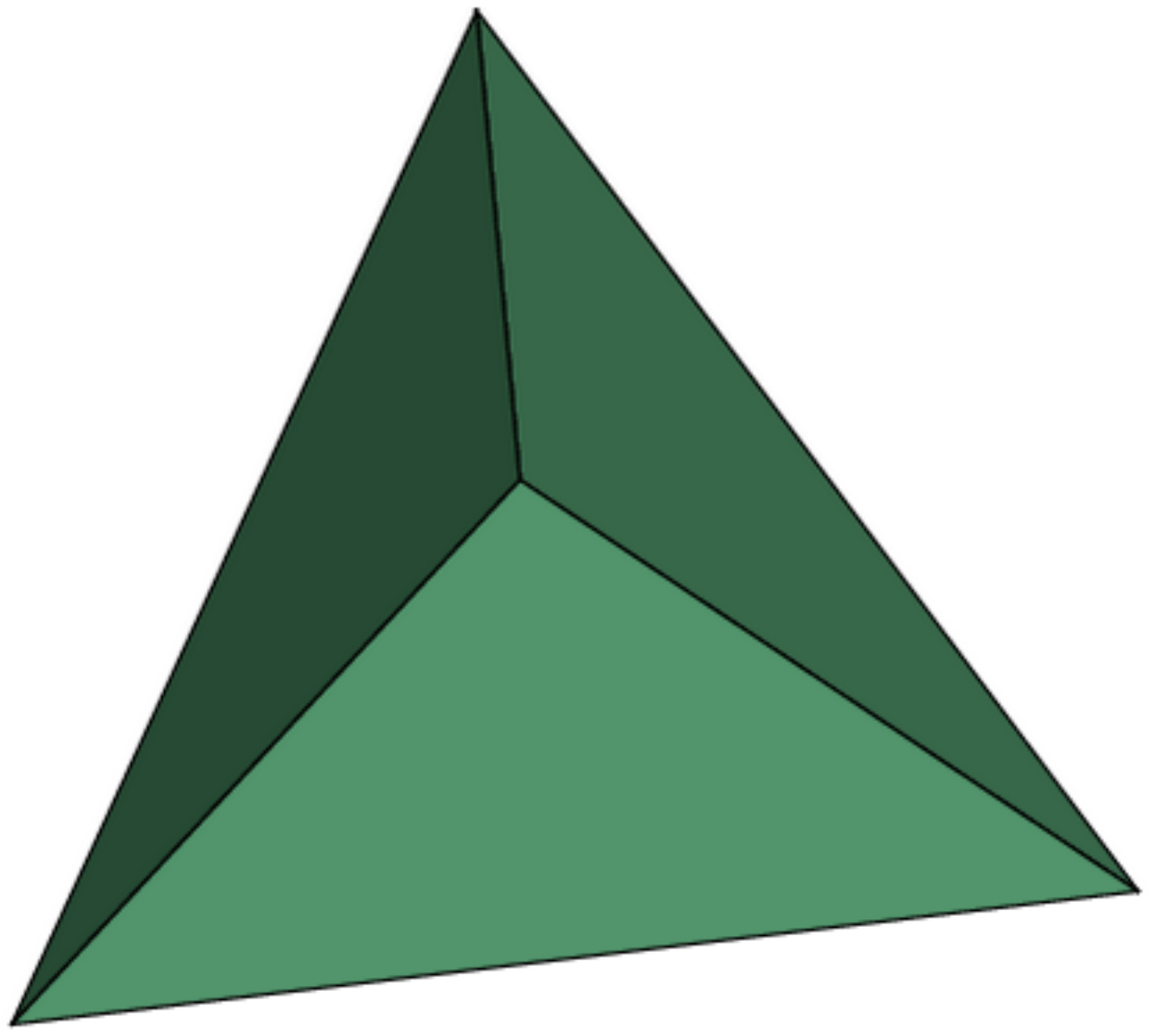}& \text{tetrahedron } &1504& 91.524  \\
  \includegraphics[scale = 0.08]{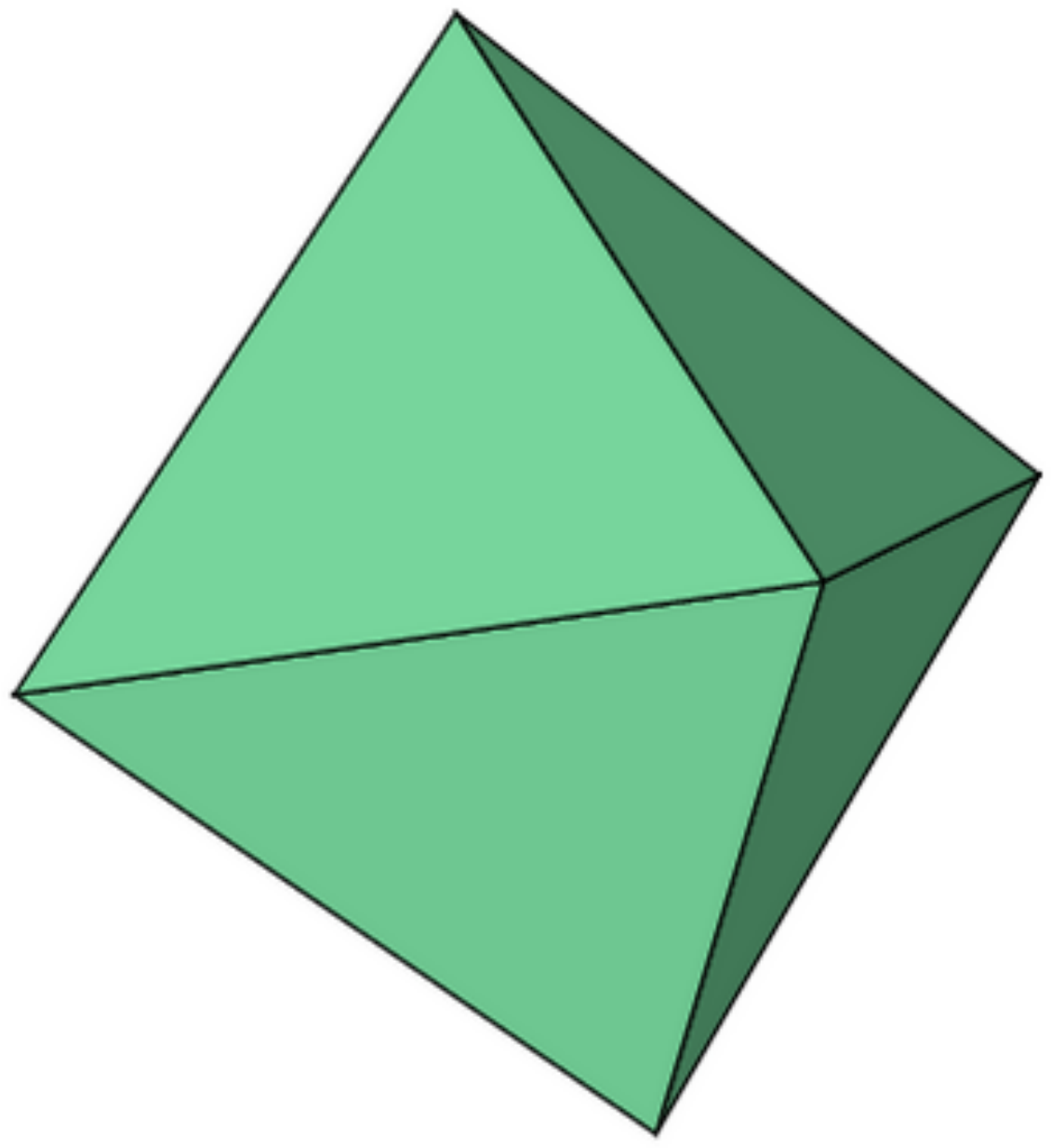}&\text{octahedron } &912 & 0.066 \\
  \includegraphics[scale = 0.05]{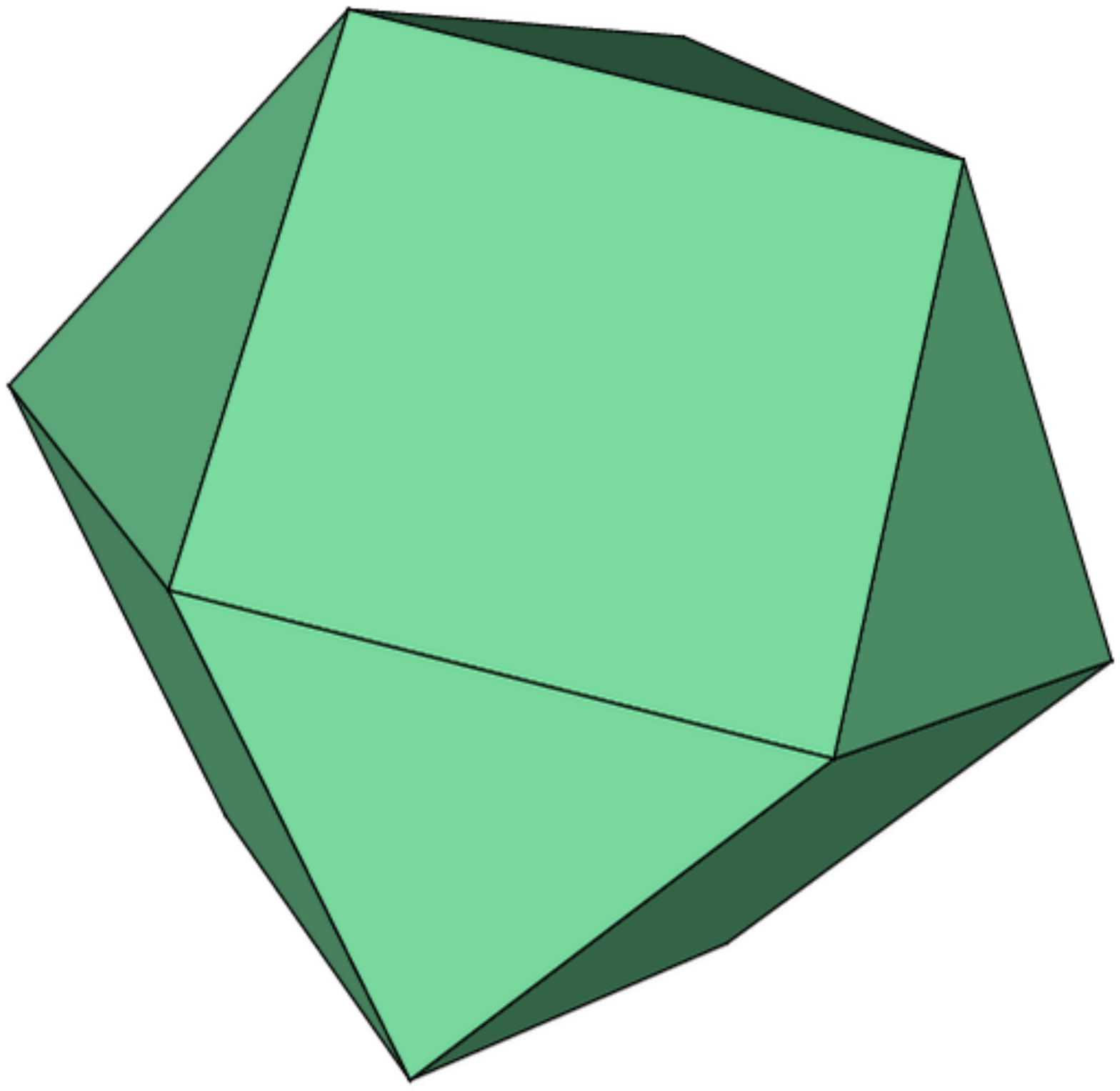}&\text{cuboctahedron } &16& 0.005\\
  \includegraphics[scale = 0.05]{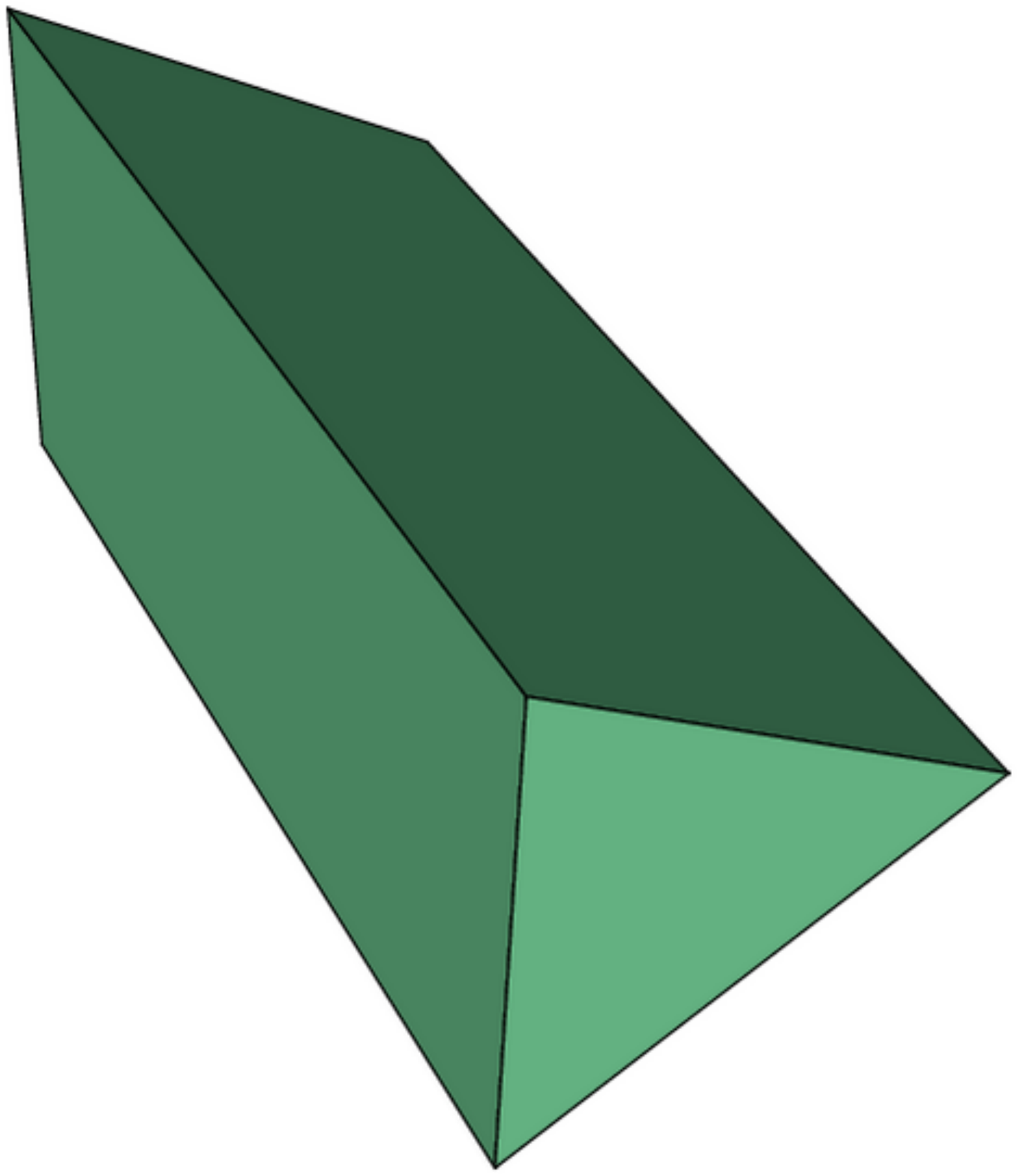}&\text{triangular prism} &1511& 2.416\\
  \includegraphics[scale = 0.07]{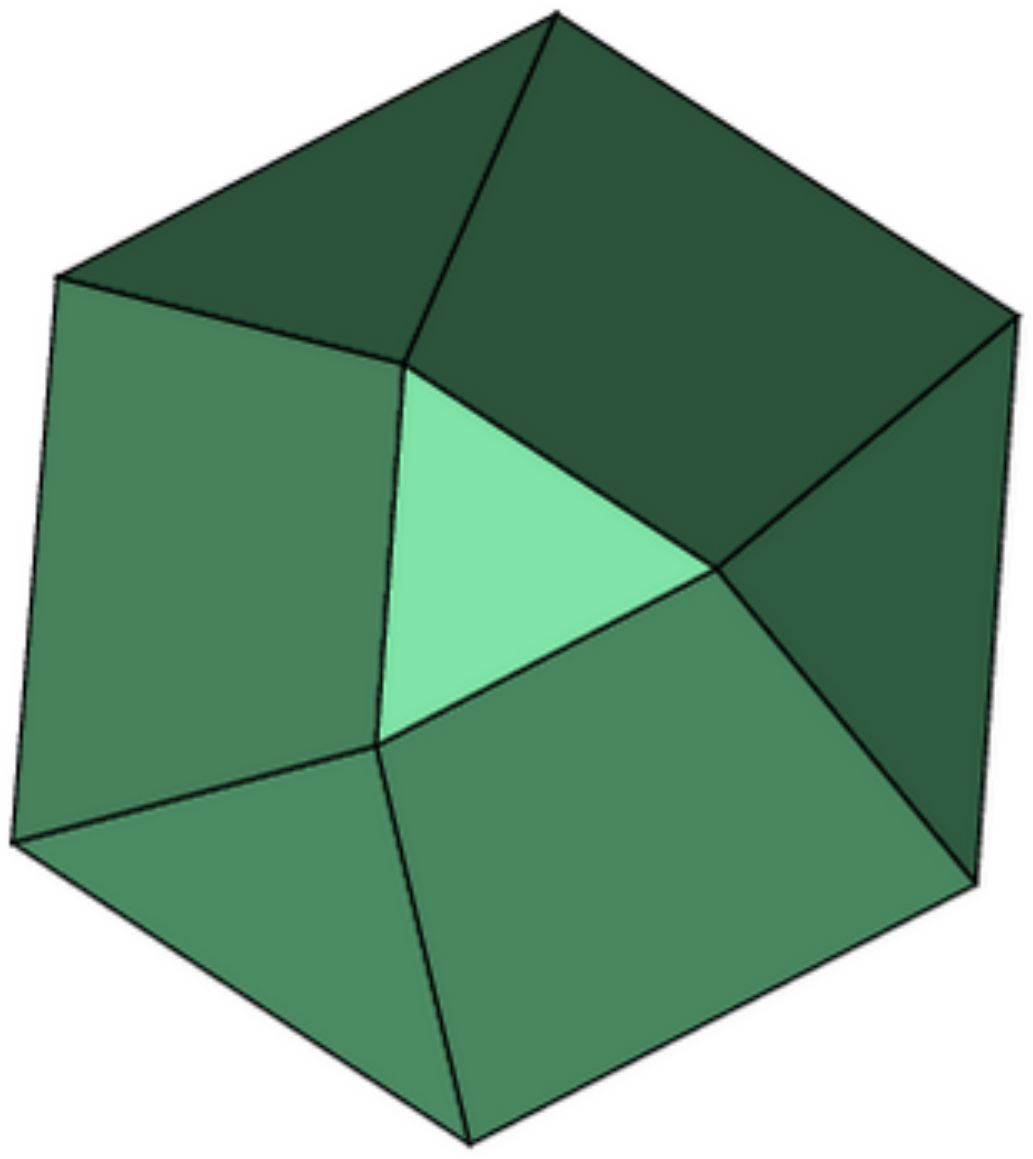}&\text{hexagonal cap }&1358& 0.199 \\
  \includegraphics[scale = 0.05]{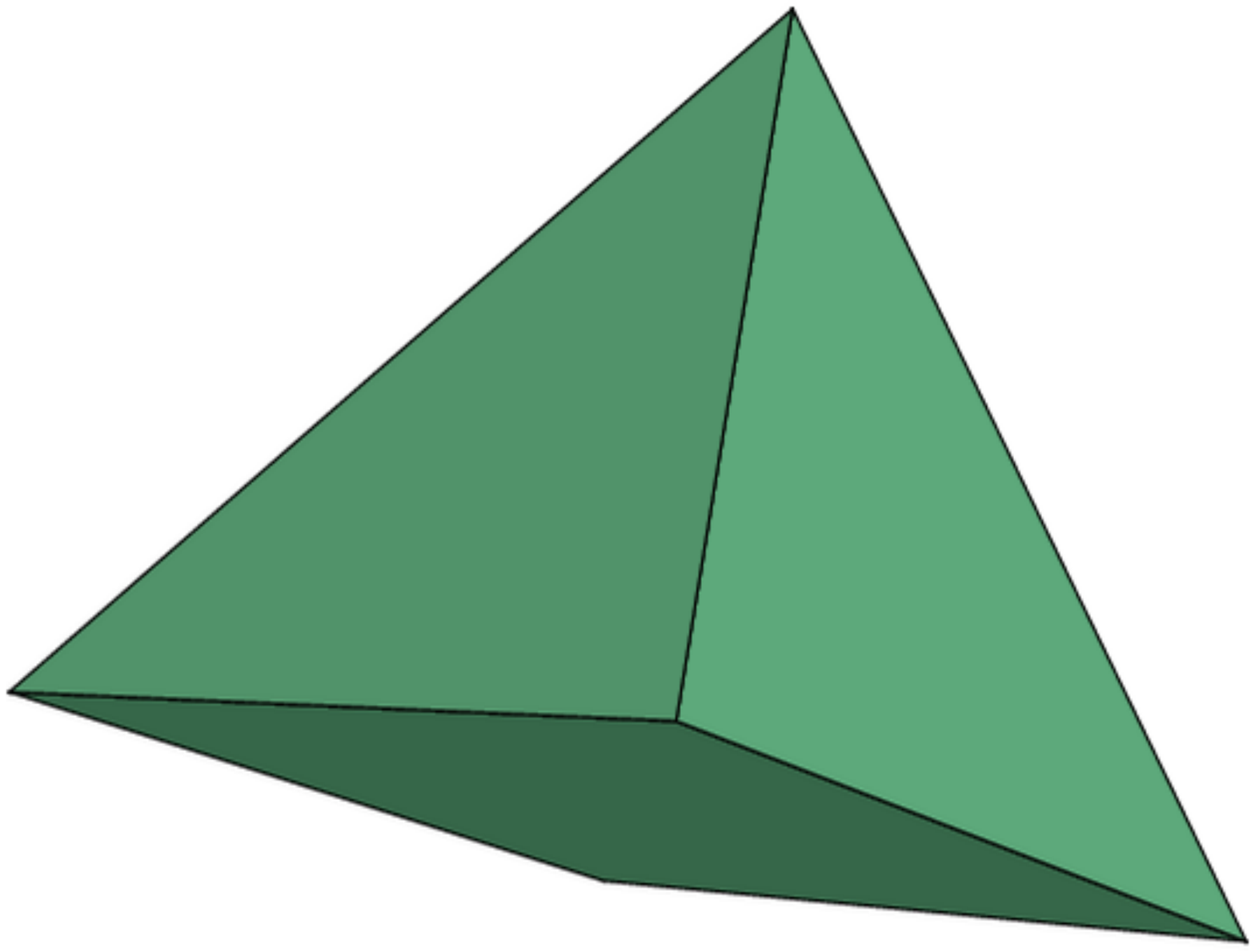}&\text{square pyramid }&1506& 5.764\\
  \includegraphics[scale = 0.1]{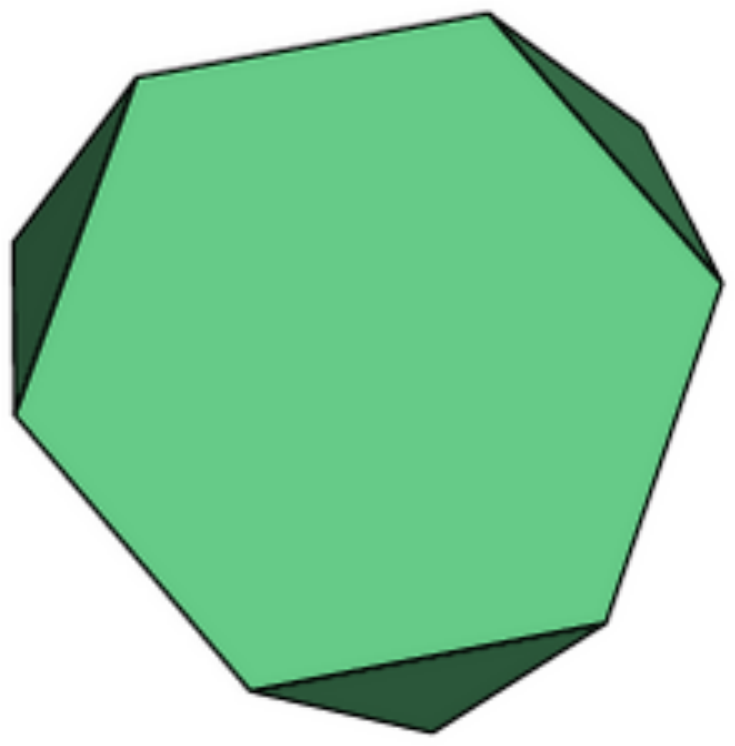}&\text{truncated tetrahedron}&60& 0.007\\
  \includegraphics[scale = 0.1]{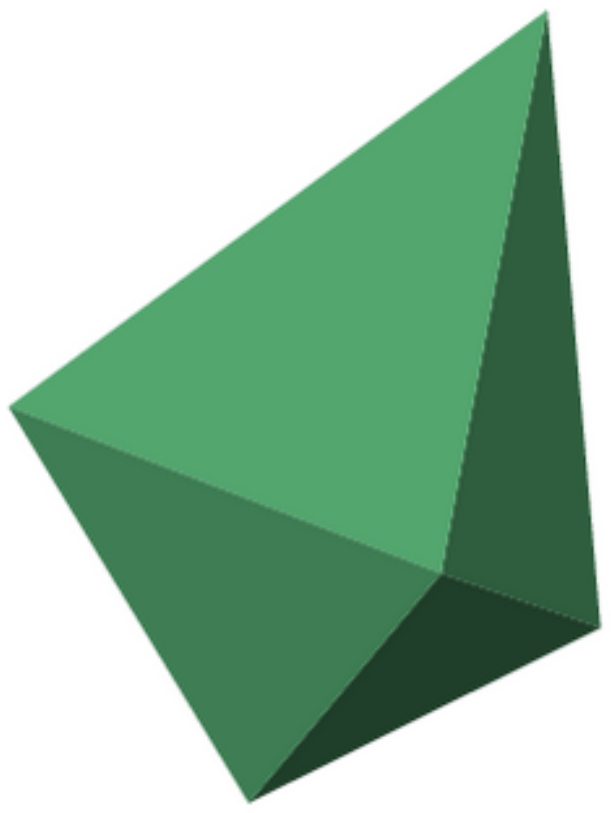}&\text{triangular dipyramid}&416& 0.019\\
  \bottomrule
\end{array}
\]
    \caption{The total number of fields that witnessed each polytope type and the total percentage of each polytope type observed in the range of computation. }
    \label{table:my_poly}
\end{table}

\begin{remark} \label{rem:tetra}
As the discriminant increases, the total number of polytopes increases and appears to be dominated by tetrahedra.  The types of observed polytopes as a percentage of the total number of polytopes computed (up to absolute discriminant $5000$) are plotted in Figure~\ref{fig:poly-pct-all}.  
\end{remark}

\begin{figure}
    \centering
    \includegraphics[width=0.7\textwidth]{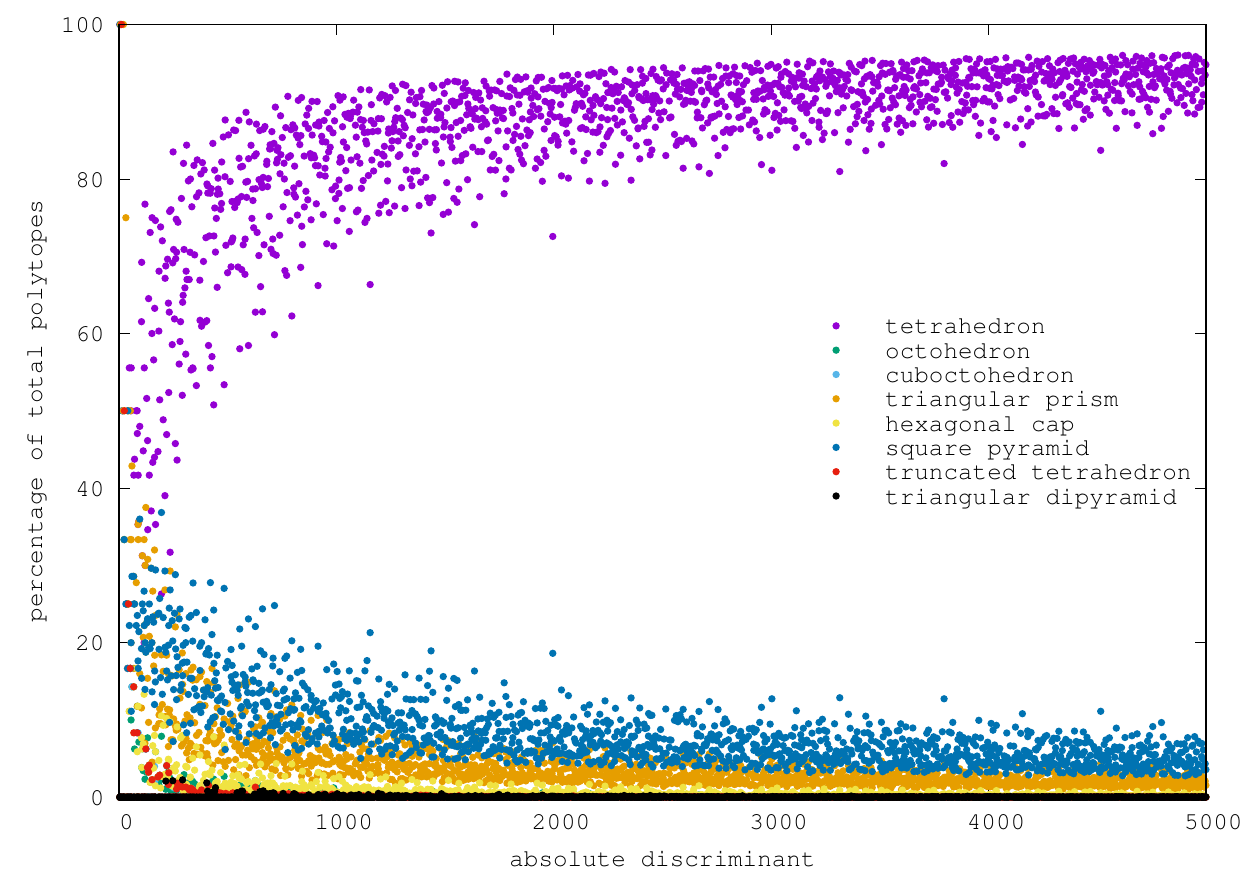}
    \caption{ Observed polytope types as a percentage of total number of polytopes, indexed by absolute discriminant of $F$. }
    \label{fig:poly-pct-all}
\end{figure}
\newpage
Next, we compute a lower bound for the number of perfect forms over $F$ by considering decompositions of the polytopes corresponding to perfect forms in the tessellation of $\Hy^3$.  Each such polytope can be decomposed into ideal tetrahedra without introducing new vertices.  Since $\GL_2(\OO_F)$-translates of such tetrahedra cover $\Hy^3$, an upper bound on the number of tetrahedra required for each polytope together with an upper bound on the volume of an ideal tetrahedron gives a lower bound on the number of perfect forms $\Nperf(F)$ involving the volume of the quotient  $\GL_2(\OO_F)\backslash\Hy^3$.  

The following bound on subdividing a polytope must be well known, but we could not find a reference, so we give the proof here.

\begin{lemma}\label{lemma:decom}
Let $P$ be a convex polytope with $V$ vertices, with $V > 6$.  Then there exists a simplicial subdivision of $P$ consisting of at most $2V - 9$ tetrahedra.
\end{lemma}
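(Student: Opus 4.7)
The plan is to fix a vertex $v$ of $P$, triangulate $\partial P$ in a way compatible with coning from $v$, and use the resulting cones as the tetrahedra of the subdivision. First I would construct the boundary triangulation as follows: for each $2$-face of $P$ containing $v$, triangulate it by fanning from $v$ (so every triangle on such a face contains $v$), and for each $2$-face not containing $v$, triangulate it by an arbitrary fan from one of its vertices. This gives a simplicial triangulation of $\partial P \cong S^{2}$ on the original $V$ vertices, hence $2V-4$ triangles by Euler's formula for triangulated spheres.

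Next I would cone from $v$: for each boundary triangle $T$ not containing $v$, take the tetrahedron $\operatorname{conv}(T \cup \{v\})$. Because $v$ lies on no $2$-face of $P$ that carries $T$, convexity of $P$ places $v$ strictly off the supporting hyperplane of that face, so each such tetrahedron is non-degenerate; a standard convexity argument then shows these tetrahedra tile $P$ with disjoint interiors (every interior point of $P$ lies on a unique ray from $v$ meeting $\partial P$ at a point of some such triangle). Boundary triangles containing $v$ collapse and contribute nothing. Writing $|F|$ for the number of vertices of a $2$-face $F$ of $P$, the triangles containing $v$ number exactly $\sum_{F \ni v}(|F|-2)$, so the subdivision uses $(2V-4) - \sum_{F \ni v}(|F|-2)$ tetrahedra.

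It remains to choose $v$ so that $\sum_{F \ni v}(|F|-2) \geq 5$, which I would do by contradiction. If $\sum_{F \ni v}(|F|-2) \leq 4$ for every vertex $v$, then summing over $v$ and reversing the order of summation gives $\sum_F |F|(|F|-2) \leq 4V$. Since $|F| \geq 3$ implies $|F|(|F|-2) \geq 3(|F|-2)$ termwise, this forces $3\sum_F (|F|-2) \leq 4V$. Combining $\sum_F |F| = 2E$ with Euler's formula $V - E + F = 2$ yields $\sum_F(|F|-2) = 2E - 2F = 2V - 4$, so $3(2V-4) \leq 4V$, whence $V \leq 6$, contradicting the hypothesis $V > 6$. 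Picking any $v$ with $\sum_{F \ni v}(|F|-2) \geq 5$ therefore produces a subdivision with at most $(2V-4) - 5 = 2V - 9$ tetrahedra, as required. I expect the main point requiring care to be the verification that the cone construction actually gives a bona fide simplicial subdivision of $P$; this is standard from convexity once we have arranged the boundary triangulation to fan from $v$ on every face incident to $v$, and the remaining steps are Euler-characteristic and elementary inequality bookkeeping.
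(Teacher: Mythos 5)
Your proof is correct and follows essentially the same route as the paper's: triangulate the boundary without new vertices, cone from a well-chosen vertex, and use Euler's formula to guarantee a vertex incident to at least five boundary triangles, leaving at most $(2V-4)-5 = 2V-9$ cones. The only cosmetic differences are that you adapt the face triangulation to the apex $v$ (which handles the degenerate, coplanar ``tetrahedra'' on faces containing $v$ a bit more carefully than the paper's version, where such faces are triangulated arbitrarily) and that you locate the good vertex by averaging $\sum_{F\ni v}(\lvert F\rvert-2)$ over the face lattice of $P$ rather than averaging vertex degrees in the triangulated sphere.
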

\begin{proof}
  First, we describe a well-known technique for subdividing $P$ into tetrahedra.  Triangulate each face of $P$ without adding vertices to get a polytope $Q$, so that vertex set $\V(Q)$ is the same as the vertex set $\V(P)$.  Fix a vertex $v$ in $\V(Q)$.  Let $S_v$ denote the set of triangular faces of $Q$ that do not contain $v$ as a vertex.  Then for each $f \in S_v$, form a tetrahedron $t_f$ using $f$ and $v$.  Then $\bigcup_{f \in S_v} t_f$ is a simplicial subdivision of $P$.  

  Let $V$, $E$, and $F$ be the number of vertices, edges and faces of $Q$.  It suffices to show there is a choice of vertex $v$ such that $\#S_v \leq 2V - 9$.  By construction, $V = \#\V(Q) = \#\V(P)$, and $S_v = F - \deg(v)$, where $\deg(v)$ denotes the degree of vertex $v$. Furthermore, the Euler characteristic is $2$, so $V - E + F = 2$.  Since all the faces are triangles, and each edge is on the boundary of exactly two triangular faces, we have $3F= 2E$.  It follows that $E = 3V - 6$ and $F = 2V - 4$.

  Each edge has exactly two boundary vertices, so the average degree $\dbar$ of a vertex of $Q$ is
  \[\dbar = \frac{1}{\#\V(Q)}\sum_{v \in \V(Q)}  \deg(v) = \frac{2E}{V} = 6 - \frac{12}{V}.\]
  It follows that for $V > 6$, we have $\dbar > 4$.  Since the average degree is greater than $4$, there exists a vertex with degree at least $5$.  Let $v$ be a vertex of $Q$ of maximal degree.  Then
  \[\#S_v = F - \deg(v) \leq F - 5 = 2V - 9,\]
  as desired.
\end{proof}

\begin{remark}
  The argument above can be strengthened to show $\#S_v \leq 2V - 10$ for $V > 12$.
\end{remark}

For a group $\Gamma$ acting properly discontinuously on $\Hy^3$, let $\mu(\Gamma)$ denote the volume of $\Gamma \backslash \Hy^3$.  Let $\zeta_F(s)$ be the Dedekind zeta function of $F$,
  \[\zeta_F(s) = \sum_{\fn \subseteq \OO_F} \frac{1}{\n_{F/\QQ}(\fn)^s}. \]
  A classical result of Humbert asserts that for an imaginary quadratic field $F$ of discriminant $\Delta$, we have
\[
    \mu(\PSL_2(\OO_F)) = \frac{\abs{\Delta}^{3/2}}{4\pi^2} \zeta_F(2).    
\]

  \begin{proposition}[{\cite[Theorem 7.3]{Borel1981}}] \label{prop:vol} Let $F$ be an imaginary quadratic field of discriminant $\Delta$. Then
\[\mu(\GL_2(\OO_F))=\frac{\abs{\Delta}^{3/2}}{8\pi^2} \zeta_F(2).\]
\end{proposition}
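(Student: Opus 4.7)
The plan is to deduce the stated formula from Humbert's formula for $\mu(\PSL_2(\OO_F))$ cited just above the proposition. The two formulas differ by exactly a factor of $2$, so the proof amounts to identifying the precise relationship between the actions of $\GL_2(\OO_F)$ and $\PSL_2(\OO_F)$ on $\Hy^3$.

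First, I would observe that the action of $\GL_2(\OO_F)$ on $\Hy^3$ factors through the projective quotient $\PGL_2(\OO_F) = \GL_2(\OO_F)/\OO_F^\times$, since scalar matrices act trivially by M\"obius transformations; hence $\mu(\GL_2(\OO_F)) = \mu(\PGL_2(\OO_F))$. Similarly, $\mu(\PSL_2(\OO_F))$ is the covolume of $\PSL_2(\OO_F) \subset \mathrm{Isom}^+(\Hy^3)$. The inclusion $\SL_2(\OO_F) \hookrightarrow \GL_2(\OO_F)$ induces an injection $\PSL_2(\OO_F) \hookrightarrow \PGL_2(\OO_F)$, and the determinant identifies the cokernel with $\OO_F^\times/(\OO_F^\times)^2$, since the scalar matrix $uI$ has determinant $u^2$.

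The key computation is to verify that $\abs{\OO_F^\times/(\OO_F^\times)^2} = 2$ uniformly in $F$. For generic imaginary quadratic $F$ this is immediate since $\OO_F^\times = \set{\pm 1}$; the exceptional fields $\QQ(i)$ and $\QQ(\sqrt{-3})$ must be checked by hand. In $\QQ(i)$, squaring sends $\set{\pm 1, \pm i}$ onto $\set{\pm 1}$, and in $\QQ(\sqrt{-3})$, squaring sends the six units onto the three cube roots of unity; in both exceptional cases the quotient still has order $2$.

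With $[\PGL_2(\OO_F) : \PSL_2(\OO_F)] = 2$ in hand, and both groups acting faithfully and properly discontinuously on $\Hy^3$ by orientation-preserving isometries, the induced quotient map is a genuine degree-$2$ cover, giving $\mu(\PGL_2(\OO_F)) = \tfrac{1}{2}\mu(\PSL_2(\OO_F))$. Combined with Humbert's formula, this yields the claim. The main obstacle is less a technical difficulty than careful bookkeeping among $\SL_2$, $\GL_2$, $\PSL_2$, and $\PGL_2$ and their kernels of action on $\Hy^3$: the relevant covolume ratio is \emph{not} the naive $[\GL_2(\OO_F) : \SL_2(\OO_F)] = \abs{\OO_F^\times}$, but rather the cokernel of the squaring map on $\OO_F^\times$, which happens to equal $2$ for every imaginary quadratic field.
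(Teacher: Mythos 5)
Your proposal is correct and follows essentially the same route as the paper: reduce to $\mu(\PGL_2(\OO_F)) = \mu(\PSL_2(\OO_F))/[\PGL_2(\OO_F):\PSL_2(\OO_F)]$ and identify that index with $[\OO_F^\times : (\OO_F^\times)^2] = 2$. The only difference is that you derive the identification of the index via the determinant map and check the exceptional unit groups of $\QQ(i)$ and $\QQ(\sqrt{-3})$ by hand, whereas the paper cites this fact from the literature.
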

\begin{proof}
  Since the center of $\GL_2(\OO_F)$ acts trivially on $\Hy^3$,  we have $\mu(\PSL_2(\OO_F)) = \mu(\SL_2(\OO_F))$ and 
$\mu(\PGL_2(\OO_F)) = \mu(\GL_2(\OO_F))$.  It follows that in order to compute $\mu(\GL_2(\OO_F))$, we need to divide $\mu(\PSL_2(\OO_F))$ by the index $[\PGL_2(\OO_F):\PSL_2(\OO_F)]$.  This index is equal to the index of squares in the units of $\OO_F$ by \cite[Lemma 3.1]{torsion}.  This index is independent of the discriminant and is equal to $2$, so the result follows.
\end{proof}

Volumes of tetrahedra in $\Hy^3$ can be expressed in terms of $\Lambda(\theta)$, the Lobachevsky function given by 
\[\Lambda(\theta) = -\int_0^\theta \log\abs{2 \sin(t)} dt.\]
It is known \cite[Corollary, page 20]{Milnor1982} that the ideal tetrahedron $T$ of maximum volume has volume 
\begin{equation}\label{eq:maxtetra}
  \vol(T) = 3\Lambda (\pi/3) \approx 1.0149416\dots.
\end{equation}
For $F = \QQ(\sqrt{-3})$, we have $\Nperf(F) = 1$, and the ideal polytope  corresponding to the perfect form is an example of such a tetrahedron.
We use \eqref{eq:maxtetra} to bound the number of perfect binary Hermitian forms over imaginary quadratic fields below.

\begin{theorem}\label{thm:bound}
Let $F$ be an imaginary quadratic field of discriminant $\Delta$.  The number $\Nperf(F)$ of perfect binary Hermitian forms over $F$  satisfies the following bound:
\[
\Nperf(F) \geq \ceiling*{\frac{\abs{\Delta}^{3/2}}{360\pi^2\Lambda (\pi/3)}\zeta_F(2)}.
\]
\end{theorem}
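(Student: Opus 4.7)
The plan is to combine three ingredients from the earlier portion of the paper: the tessellation of $\Hy^3$ by the ideal polytopes coming from the fan $S$ of Theorem~\ref{thm:koecher}, the uniform vertex bound from Theorem~\ref{thm:minbound}, and the volume formula of Proposition~\ref{prop:vol}. After descending by positive homotheties, the $4$-dimensional cones of $S$ yield ideal polytopes $P_1, \dots, P_{\Nperf(F)}$, one per $\GL_2(\OO_F)$-orbit, whose translates tessellate $\Hy^3$. Since the polytope stabilizers are finite by Theorem~\ref{thm:koecher}, a fundamental-domain argument gives
\[
\mu(\GL_2(\OO_F)) = \sum_{i=1}^{\Nperf(F)} \frac{\vol(P_i)}{\#\mathrm{Stab}_{\GL_2(\OO_F)}(P_i)} \leq \sum_{i=1}^{\Nperf(F)} \vol(P_i).
\]

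Next, I would bound each $\vol(P_i)$ uniformly. By Theorem~\ref{thm:minbound}, every $P_i$ has at most $12$ vertices (and by Remark~\ref{rem:min} the vertex count may be strictly smaller than the minimal-vector count, which only helps). For $V > 6$, Lemma~\ref{lemma:decom} subdivides $P_i$ into at most $2V - 9 \leq 15$ ideal tetrahedra without adding new vertices; the residual cases $V \leq 6$ (tetrahedra, pyramids, prisms, octahedra, \dots) admit simpler subdivisions using at most $4$ tetrahedra, so $15$ is a uniform cap. Since every ideal tetrahedron in $\Hy^3$ has volume at most $3\Lambda(\pi/3)$ by~\eqref{eq:maxtetra}, I conclude that $\vol(P_i) \leq 45\,\Lambda(\pi/3)$ for each $i$.

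Feeding these two estimates back into the volume formula $\mu(\GL_2(\OO_F)) = \frac{\abs{\Delta}^{3/2}}{8\pi^2}\zeta_F(2)$ of Proposition~\ref{prop:vol} yields
\[
\frac{\abs{\Delta}^{3/2}}{8\pi^2}\,\zeta_F(2) \leq 45\,\Lambda(\pi/3)\,\Nperf(F),
\]
and rearranging produces the stated inequality; the ceiling is legitimate because $\Nperf(F) \in \ZZ_{\geq 0}$, and the denominator $360 = 8 \cdot 45$ emerges precisely from the volume formula and the vertex/tetrahedron bound. I expect the only delicate step to be justifying the first display: one needs the fact that the $P_i$ honestly tessellate $\Hy^3$ under $\GL_2(\OO_F)$ with overlaps only on boundaries, and that the finite stabilizer action contributes with the expected weight $1/\#\mathrm{Stab}$. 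Both follow from Theorem~\ref{thm:koecher} together with the identification of perfect forms with the top-dimensional cones; after that, the remainder of the argument is bookkeeping.
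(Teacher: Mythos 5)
Your proposal is correct and follows essentially the same route as the paper: tessellation volume inequality, the $12$-vertex bound from Theorem~\ref{thm:minbound}, the $2V-9\leq 15$ subdivision from Lemma~\ref{lemma:decom}, the maximal ideal tetrahedron volume $3\Lambda(\pi/3)$, and Proposition~\ref{prop:vol}. Your explicit handling of the $V\leq 6$ cases (which Lemma~\ref{lemma:decom} does not cover) and the stabilizer-weighted volume identity are small refinements the paper leaves implicit, but they do not change the argument.
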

\begin{proof}
  Let $R$ denote a set of representatives of the $\GL_2(\OO_F)$-orbits of perfect binary Hermitian forms over $F$, so that $\Nperf(F) = \#R$.  For each perfect form $A \in R$, let $p_A$ denote the corresponding ideal Voronoi polytope in $\Hy^3$ guaranteed by Theorem~\ref{thm:koecher}.  Since $\GL_2(\OO_F)$ acts on $\Hy^3$ by isometries and the translates of $R$ cover $\Hy^3$, the volume of the quotient $\GL_2(\OO_F) \backslash \Hy^3$ is bounded by the sum of the volumes of the polytopes in $R$, so 
  \[\mu(\GL_2(\OO_F)) \leq  \sum_{A \in R} \vol(p_A) \leq \Nperf(F) \max_{A \in R} (\vol(p_A)).\]
  By Theorem~\ref{thm:minbound}, each $A$ in $R$ has at most $12$ minimal vectors.  Thus each $p_A$ is an ideal polytope with at most $12$ vertices.  By Lemma~\ref{lemma:decom}, there is a decomposition of $p_A$ into at most $15$ tetrahedra.  It follows that
  \[\max_{A \in R}(\vol(p_A)) \leq 15 \vol(T) = 45 \Lambda(\pi/3),\]
  by \eqref{eq:maxtetra}.  Then
  \[  \mu(\GL_2(\OO_F)) = \frac{\abs{\Delta}^{3/2}}{8\pi^2} \zeta_F(2) \leq  45 \Nperf(F) \Lambda(\pi/3),\]
and the result follows.
\end{proof}

The proof of Theorem~\ref{thm:bound} is obtained by proving each Voronoi polytope has volume less than or equal to $15\vol(T)$, where $T$ is an ideal hyperbolic tetrahedron $T$ of maximum volume.  The observation in Remark~\ref{rem:tetra} is that as the discriminant increases, most of the Voronoi polytopes that arise are tetrahedra.  It follows that a better estimate for the average volume of a Voronoi polytope is $\vol(T)$.  This suggests that we might get a good estimate of $\Nperf(F)$ by considering $15$ times the bound given in Theorem~\ref{thm:bound}.  Let $E(F)$ denote this value,
\[E(F) = \ceiling*{\frac{\abs{\Delta}^{3/2}}{24\pi^2\Lambda (\pi/3)}\zeta_F(2)}.\]
Then indeed the data bears out this idea.  Figure~\ref{fig:estimate-ratio} shows the ratio $\Nperf(F)/E(F)$.  This ratio  is close to $1$ in the range of the computation, so $E(F)$ is a good estimate for $\Nperf(F)$.  As the discriminant increases, the ratio appears to approach a value strictly greater than $1$.  This suggests that while $E(F)$ is not a lower bound for $\Nperf(F)$ for small discriminants, there exists $D$ such that for $\abs{\Delta} \geq D$, we have $\Nperf > E(F)$.

\begin{figure}[h!]
    \centering
    \includegraphics[width=0.75\textwidth]{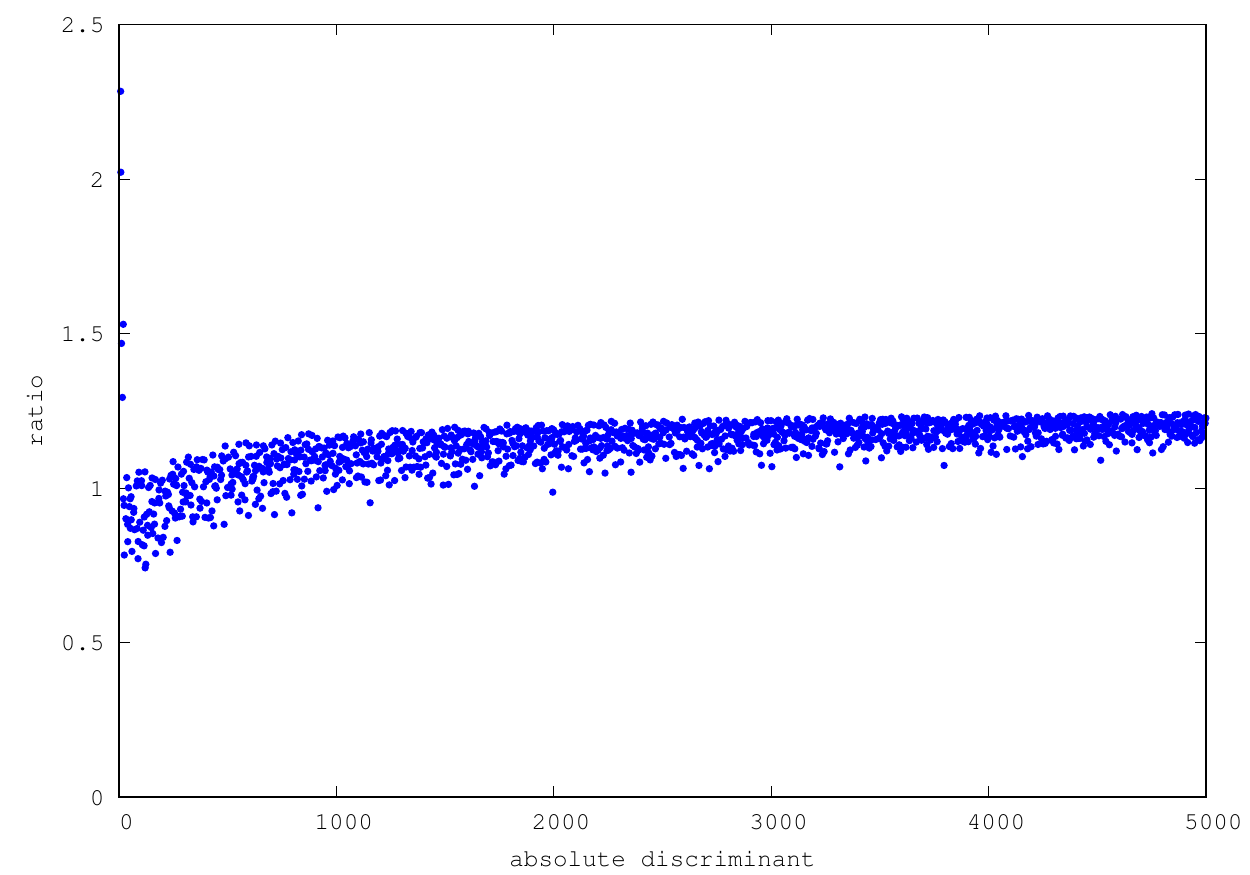} 
    \caption{The ratio $\Nperf(F)/E(F)$ of the number of perfect forms $\Nperf(F)$ to the estimate $E(F)$, indexed by absolute discriminant of $F$. }
    \label{fig:estimate-ratio}
\end{figure}

\bibliographystyle{plain}
\bibliography{imquad-hermitian-forms}

\end{document}